\documentclass[preprint,12pt]{elsarticle}

\usepackage[papersize={210mm,297mm},lmargin=2.5cm,rmargin=2.5cm,top=3cm,bottom=3cm]{geometry}

\usepackage{amsmath,amsfonts,amssymb,amsthm}
\usepackage{latexsym,pdfsync,xcolor,graphicx}
\usepackage[nottoc]{tocbibind}
\usepackage[T1]{fontenc}
\usepackage[utf8]{inputenc}												
\usepackage[english]{babel}			
\usepackage{tikz}
\usepackage{appendix}
\usepackage{hyperref}
\usepackage{enumerate}
\usepackage[shortlabels]{enumitem}
\usepackage{longtable}
\usepackage{float}
\usepackage{multirow,array}
\usepackage{verbatim}
\usepackage{titlesec}
\usepackage{graphicx}
\usepackage{caption}
\usepackage{subcaption}
\usepackage{multicol}

\newtheorem{theorem}{Theorem}[section]

\newtheorem{proposition}[theorem]{Proposition}
\newtheorem{remark}[theorem]{Remark}
\numberwithin{equation}{section}
\newtheorem*{conjecture*}{Conjecture}


\newcommand{\R}{\ensuremath{\mathbb{R}}}

\journal{****}


\begin{document}

\begin{frontmatter}

\title{Global phase portraits of a predator-prey system}
\author[a]{\'Erika Diz-Pita}
\address[a]{Departamento de Estat\'istica, An\'alise Matem\'atica e Optimizaci\'on, Universidade
	de Santiago de Compostela, 15782 Santiago de Compostela, Spain }
\ead{erikadiz.pita@usc.es}
\author[b]{Jaume Llibre}
\address[b]{Departament de Matem\`atiques, Universitat Aut\`onoma de Barcelona, 08193 Bellaterra, Barcelona, Spain}
\ead{jllibre@mat.uab.cat}
\author[a]{M. Victoria Otero-Espinar}
\ead{mvictoria.otero@usc.es}
\begin{abstract}
	
We classify the global dynamics of a family of Kolmogorov systems depending on three parameters which has ecological meaning as it modelizes a predator-prey system. We obtain all their topologically distinct global phase portraits in the positive quadrant of the Poincaré disc, so we provide all the possible distinct dynamics of these systems.
\end{abstract}
\begin{keyword}
Predator-prey system \sep Kolmogorov system \sep global phase portrait \sep Poincar\'e disc.
\end{keyword}

\end{frontmatter}
\section{Introduction} 

Rosenzweig and MacArthur introduced in \cite{RM} the following predator-prey model
\begin{equation*}
	\begin{split}
		\dot{x}&=r x \left( 1- \frac{x}{K} \right) - y \frac{m x }{b+x}, \\[0.1cm]
		\dot{y}&= y \left(- \delta + c \frac{m x }{b+x} \right),
	\end{split}
\end{equation*}
where the dot as usual denotes derivative with respect to the time $t$, $x\ge 0$ denotes the prey density (\#/unit of area) and $y\ge 0$ denotes the predator density (\#/unit of area), the parameter $\delta>0$ is the death rate of the predator, the function $mx/(b+x)$ is the \# prey caught per predator per unit time, the function $x\to rx ( 1-x/K)$ is the growth of the prey in the absence of predator, and $c>0$ is the rate of conversion of prey to predator. 

The Rosenzweig and MacArthur system is a particular system of the general predator–prey systems with a Holling type II, see \cite{H1, H2}.

In \cite{Huzak} Huzak reduced the study of the Rosenzweig and MacArthur system to study a polynomial differential system. In order to do that the first step is to do the rescaling $(\overline{x},\overline{y},\overline{b}, \overline{c},\overline{\delta})=(x/K,(m/rK)y,b/K, cm/r,\delta/r)$. After denoting again $(\overline{x},\overline{y},\overline{b}, \overline{c},\overline{\delta})$ by $(x,y,b,c,\delta)$ and doing a time rescaling multiplying by $b+x$, the obtained polynomial differential system of degree three is
\begin{equation}\label{sis}
	\begin{split}
		\dot{x}&=x(-x^2+(1-b)x-y+b),\\
		\dot{y}&=y((c-\delta)x - \delta b),
	\end{split}
\end{equation}
where $b,c$ and $\delta$ are positive parameters. This system is studied in the positive quadrant of the plane $\mathbb{R}^2$ where it has ecological meaning. See systems (1.1) and (2.2) of \cite{Huzak}.

Huzak \cite{Huzak}  focuses his work  in the study of the periodic sets that can produce the canard relaxation oscillations after perturbations. He finds three types of limit periodic sets and studies their cyclicity by using the geometric singular perturbation theory and the family blow-up at $(x,y,\delta)=(0,br/m,0)$, where $\delta$ is the singular perturbation parameter. He proves that the upper bound on the number of limit cycles of the system is 1 or 2 depending on the parameters. 

Systems \eqref{sis} are particular Kolmogorov systems. These systems were proposed in 1936, see \cite{Kolmogorov}, as an extension of the Lotka-Volterra systems to arbitrary dimension and arbitrary degree. 

We want to complete the study of the dynamics of systems \eqref{sis} and classify all their phase portraits on the closed positive quadrant of the Poincaré disc, in this way we also can control the dynamics of the system near the infinity. This classification is given in the following result, except for the case with the parameters satisfying $0<b\delta<c-\delta$, $\delta(\delta(b+1)+c(b-1))^2 - 4 c (c-\delta)^2 (c-\delta(b-1))<0$ and $1+c-\delta-b-b\delta>0$, in which we make a conjecture about the expected global phase portrait.

\begin{theorem}
	The global phase portait of system \eqref{sis} in the closed positive quadrant of the Poincaré disc is topologically equivalent to one of the $3$ phase portraits of Figure \ref{fig:globales} in the following way:
	\begin{itemize}
		\item If $b\delta\geq c-\delta$ the phase portrait is equivalent to phase portrait (A). \vspace{0.1cm}
		
		\item If $0<b\delta<c-\delta$ and $\delta(\delta(b+1)+c(b-1))^2 - 4 c (c-\delta)^2 (c-\delta(b-1))\geq0$ the phase portrait is equivalent to phase portrait (B). 
		\vspace{0.1cm}
		
		\item If $0<b\delta<c-\delta$ and $\delta(\delta(b+1)+c(b-1))^2 - 4 c (c-\delta)^2 (c-\delta(b-1))<0$ and $1+c-\delta-b-b\delta<0$ the phase portrait is equivalent to phase portrait (C). \vspace{0.1cm}
		
	\end{itemize}
	
\end{theorem}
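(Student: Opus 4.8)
The plan is to carry out a complete qualitative analysis of system \eqref{sis} in the closed first quadrant of the Poincaré disc, combining the classification of the finite and infinite singular points with the behaviour on the invariant boundary and a limit-cycle analysis.

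First I would locate the finite singular points. Since \eqref{sis} is a Kolmogorov system, the two coordinate axes are invariant, so the boundary of the first quadrant is a union of orbits. Setting $\dot x=\dot y=0$ one finds the origin, the point $(1,0)$ on the prey axis, and---precisely when $0<b\delta<c-\delta$---a unique interior equilibrium $(x^\ast,y^\ast)$ with $x^\ast=\delta b/(c-\delta)\in(0,1)$ and $y^\ast=-x^{\ast2}+(1-b)x^\ast+b>0$. Linearising, the origin is always a saddle (eigenvalues $b$ and $-\delta b$) whose separatrices lie on the axes, while at $(1,0)$ the eigenvalues are $-1-b<0$ and $(c-\delta)-\delta b$; hence $(1,0)$ is an attracting node when $b\delta\ge c-\delta$ (case A, with the equality $b\delta=c-\delta$ giving a semi-hyperbolic point to be treated by a centre-manifold reduction) and a saddle when $b\delta<c-\delta$ (cases B and C). At the interior point the Jacobian has positive determinant $x^\ast(c-\delta)y^\ast$ and trace $x^\ast\big((1-b)-2x^\ast\big)$, so it is never a saddle; a direct computation shows that $\mathrm{tr}^2-4\det$ is, up to a positive factor, exactly $\delta\big(\delta(b+1)+c(b-1)\big)^2-4c(c-\delta)^2\big(c-\delta(b-1)\big)$. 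This identifies the node regime (case B) with the sign $\ge 0$ and the focus regime (case C) with the sign $<0$, and the sign of the trace fixes the stability in each subcase.

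Next I would compactify the system by the Poincaré compactification. Because $\dot x$ has degree three and $\dot y$ degree two, the homogeneous top-degree part forces all infinite singular points to sit at the endpoints of the two axes. In the chart associated with the prey axis the infinite point is hyperbolic and easy to classify, whereas in the chart associated with the predator axis the linear part vanishes identically, so this singular point is strongly degenerate and must be resolved by directional blow-ups, keeping track of the sign of the incoming separatrices. This blow-up, together with the determination of which separatrix of the origin-saddle and of the saddle $(1,0)$ reaches infinity, is the most delicate bookkeeping of the local study.

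With all local pictures in hand, the boundary orbits are pinned down directly: on $x=0$ one has $\dot y=-\delta b\,y<0$, and on $y=0$ one has $\dot x=-x(x-1)(x+b)$, so the flow on the axes is directed towards $(1,0)$ and the origin in the expected way. The crux of the whole argument is then to exclude periodic orbits in the open quadrant: I would seek a Dulac function of Kolmogorov type (a monomial $x^{\alpha}y^{\beta}$) rendering the divergence of the field sign-definite, and invoke the Poincaré--Bendixson theorem together with the invariance of the boundary to show that every orbit converges to the unique attractor. Here the hypothesis $1+c-\delta-b-b\delta<0$ of case C is exactly what supplies the sign condition needed to rule out limit cycles and force global convergence to the interior point; when $1+c-\delta-b-b\delta>0$ this control is lost, which is why that subcase is only conjectured, and I expect this limit-cycle exclusion to be the main obstacle of the proof. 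Finally I would assemble the separatrix configuration, checking consistency through the Poincaré--Hopf formula on the disc, and conclude that the global phase portrait is topologically equivalent to (A), (B) or (C) according to the stated inequalities.
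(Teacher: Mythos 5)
Your skeleton---classification of the finite points, Poincar\'e compactification with a blow-up at the degenerate infinite point on the predator axis, the directed flow on the invariant axes, and a Dulac function ruling out periodic orbits under $1+c-\delta-b-b\delta<0$---coincides with the paper's route for the first and third bullets. In particular the paper's Dulac function is $\varphi(x,y)=1/x$, a monomial of exactly the Kolmogorov type you suggest; combined with the observation that $\dot x<0$ for $x\geq 1$, it gives $\Delta(x,y)<0$ on the strip $0\leq x\leq 1$ precisely under the third bullet's hypothesis. Your identification of the discriminant of the Jacobian at the interior equilibrium with $B$ and of the trace sign with $A$, and your treatment of the semi-hyperbolic point at $b\delta=c-\delta$, also agree with Section \ref{sec:finite}.

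There is, however, a genuine gap in the second bullet: phase portrait (B) of Figure \ref{fig:globales} contains a limit cycle, and your blanket plan to \emph{exclude periodic orbits in the open quadrant} and prove convergence to \emph{the unique attractor} cannot work there---not merely because a step is missing, but because the intended step is false in that regime. Under the second bullet's hypotheses (case 3 of Table \ref{tab:cases}) the interior point $P_2$ is an unstable node, $O_1$ is an unstable node at infinity, and the saddle $P_1$ has its unstable separatrix entering the open quadrant; since the axes are invariant, the Poincar\'e--Bendixson theorem then \emph{forces} at least one attracting periodic orbit (this is the paper's Theorem \ref{th_LimCic3-5}), so no Dulac function of any form can exist there. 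Moreover, existence is not enough: to conclude topological equivalence to portrait (B) one must show the cycle is \emph{unique}, which Bendixson--Dulac techniques cannot deliver; the paper obtains uniqueness by verifying conditions (i)--(iv) of the predator-prey uniqueness theorem of Liou and Cheng \cite{LiouCheng}, and supplements this with the Hopf analysis of Theorem \ref{th_HopfBif} (first Lyapunov coefficient $\ell_1<0$), which in addition settles the stability of the weak focus when $A=0$ (case 7), where your ``sign of the trace fixes the stability'' criterion is silent because the trace vanishes. None of these three ingredients---existence via a Poincar\'e--Bendixson trapping argument, uniqueness via \cite{LiouCheng}, and the Lyapunov-coefficient computation---appears in your proposal, so the bullet yielding portrait (B) remains unproved.
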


\begin{figure}[H]
	\centering
	\begin{subfigure}[h]{3.7cm}
		\centering
		\includegraphics[width=3.5cm]{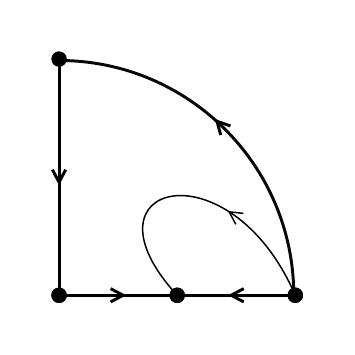}
		\caption*{(A)}
	\end{subfigure}
	\begin{subfigure}[h]{3.7cm}
		\centering
		\includegraphics[width=3.5cm]{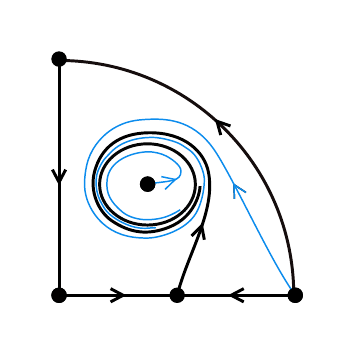}
		\caption*{(B)}
	\end{subfigure}
	\begin{subfigure}[h]{3.7cm}
		\centering
		\includegraphics[width=3.5cm]{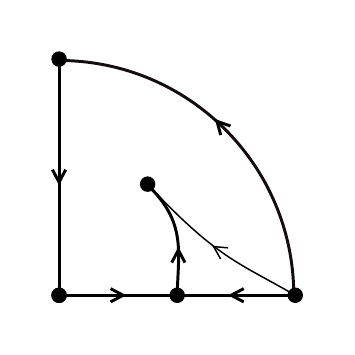}
		\caption*{(C)}
	\end{subfigure}
	\caption{Phase portraits of system \eqref{sis} in the positive quadrant of the Poincaré disc.}
	\label{fig:globales}
\end{figure}

\begin{conjecture*}
	The global phase portait of system \eqref{sis} in the closed positive quadrant of the Poincaré disc if $0<b\delta<c-\delta$ and $\delta(\delta(b+1)+c(b-1))^2 - 4 c (c-\delta)^2 (c-\delta(b-1))<0$  and $1+c-\delta-b-b\delta>0$ is also topologically equivalent to the one in Figure \ref{fig:globales}(C).
\end{conjecture*}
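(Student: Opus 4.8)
The plan is to reduce the conjecture to a single issue—the control of limit cycles—by showing that every other ingredient of the global phase portrait is identical to the one already established for case (C). Write $\Delta=1+c-\delta-b-b\delta$, so that the conjecture region is obtained from case (C) by reversing the sign of $\Delta$. Since the topological type of a portrait on the Poincaré disc is fixed by the local portraits at the finite and infinite singular points, by the separatrix connections, and by the limit cycles, I would check these three ingredients in turn. The finite singularities of \eqref{sis} are the origin, the prey point $(1,0)$, and the coexistence point $p^{*}=(x^{*},y^{*})$ with $x^{*}=\delta b/(c-\delta)$ and $y^{*}=(1-x^{*})(x^{*}+b)$, which lies in the open quadrant exactly when $0<b\delta<c-\delta$.

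First I would record the local phase portraits. A direct computation gives that the origin is a hyperbolic saddle (eigenvalues $b>0$ and $-\delta b<0$) whose separatrices are the invariant coordinate axes, and that $(1,0)$ is a saddle because $c-\delta-b\delta>0$ throughout the region under study. At $p^{*}$ one finds $\det J>0$ and $\operatorname{tr}J=x^{*}(1-b-2x^{*})$, so the discriminant condition $<0$ makes $p^{*}$ a focus. The essential observation is that $\operatorname{tr}J$ has the sign of $c(1-b)-\delta(1+b)$, which is independent of $\Delta$; hence the Hopf surface $\operatorname{tr}J=0$ is transverse to $\Delta=0$ and meets the conjecture region, so $p^{*}$ can be either a stable or an unstable focus there. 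For the points at infinity I would use the Poincaré compactification: the only infinite singularities in the closed quadrant lie at the ends of the axes, the one in the $x$-direction being a (degenerate) node, while the one in the $y$-direction has vanishing linear part and must be desingularized by a (quasi-homogeneous) blow-up. I expect $\Delta$ to surface here as the sign of a higher-order coefficient, and a first concrete task is to check that the blown-up portrait at infinity is the same for $\Delta>0$ as for $\Delta<0$.

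Next I would fix the global connections. Because both axes are invariant, the boundary of the quadrant is a union of orbits, and this rigidly constrains the separatrix skeleton: the stable and unstable manifolds of the two boundary saddles, together with the orbits entering from the $x$-direction at infinity, are forced into the same configuration as in (C). Combining the Poincaré--Hopf index formula with the Poincaré--Bendixson theorem then localizes the $\omega$-limit of the unstable separatrix of $(1,0)$ and of the orbits coming from infinity; when $p^{*}$ is a stable focus these accumulate on $p^{*}$, and when $p^{*}$ is unstable they must accumulate on a limit cycle, so the consistency of the whole region with (C) hinges on the cycle structure.

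The crux, and the reason the statement is only a conjecture, is precisely this control of the limit cycles around $p^{*}$: one must prove that their number and stability match portrait (C) over the whole region $\Delta>0$, including the subregion where $p^{*}$ is unstable and a Hopf bifurcation has occurred. My first attempt would be a Dulac function of Kolmogorov type $B=x^{\alpha}y^{\beta}$: with $P,Q$ the right-hand sides of \eqref{sis} one computes $\partial_{x}(BP)+\partial_{y}(BQ)=x^{\alpha}y^{\beta}R(x,y)$ and chooses the exponents to make $R$ sign-definite on the open quadrant, which yields a parametric inequality that holds throughout $\Delta<0$—this is what underlies case (C)—but becomes indefinite once $\Delta>0$, and that is exactly the gap. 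Failing this, I would bring \eqref{sis} to a generalized Liénard form and invoke a uniqueness or nonexistence criterion such as Zhang Zhifen's theorem, or embed the family in a one-parameter rotated vector field and continue the cycle count from the Hopf locus $\operatorname{tr}J=0$ and from the node boundary (discriminant $=0$), where it is known. I expect this final step to be the main obstacle: none of the classical criteria is currently known to cover the entire region $\Delta>0$, so a complete proof will probably require a tailored Dulac--Cherkas function or a monotonicity argument for the Poincaré return map, with the numerical evidence indicating that no additional limit cycle appears and that the portrait remains the one in Figure \ref{fig:globales}(C).
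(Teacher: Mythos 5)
The statement you were asked to prove is, in fact, an open conjecture in the paper: the authors do not prove it, and their entire treatment consists of (i) the observation that the local portraits at the finite and infinite singular points are unchanged (the blow-up at $O_2$ in Section \ref{sec:infinite} is parameter-independent, so your worry that $1+c-\delta-b-b\delta$ might surface there does not materialize), (ii) Theorem \ref{thm:noperiodic}, a Bendixson--Dulac argument with the function $\varphi(x,y)=1/x$ --- an instance of your proposed family $x^{\alpha}y^{\beta}$ --- which yields nonexistence of periodic orbits precisely when $1+c-\delta-b-b\delta<0$ and degenerates when the sign reverses, and (iii) an explicitly \emph{assumed} conjecture, supported only by numerics, that no limit cycles exist in cases 4, 6 and 7 of Table \ref{tab:cases} when $1+c-\delta-b-b\delta>0$. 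Your proposal reproduces exactly this reduction and then stalls at exactly the same point, which you candidly acknowledge. So there is no genuine divergence to report, but neither is there a proof: the control of limit cycles over the region $\Delta>0$ is the whole content of the conjecture, and none of your fallback suggestions (Zhang Zhifen's criterion after a Li\'enard reduction, rotated vector fields continued from the Hopf locus, a Dulac--Cherkas function) is carried out; note also that the Li\'enard-type uniqueness route is already exploited in the paper, via \cite{LiouCheng}, but only under $A>0$.

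One of your side observations deserves sharpening, because it exposes an imprecision in the statement itself. You correctly note that in the stated region $P_2$ can be either a stable or an unstable focus, since the sign of $\operatorname{tr}J$, i.e.\ of $A=\delta\left(c(1-b)-\delta(1+b)\right)$, is not fixed by the hypotheses. But when $A>0$ (case 5 of Table \ref{tab:cases}) the paper \emph{proves}, via Theorem \ref{th_LimCic3-5} and the uniqueness proposition, that a unique stable limit cycle exists, giving portrait (B) of Figure \ref{fig:globales}, not (C); and a short computation shows $A>0$ forces $1+c-\delta-b-b\delta>0$, so case 5 sits inside the literal conjecture region. Hence, read literally, the conjecture is already refuted by the paper's own results on the subregion $A>0$, and it must be understood as restricted to region II-b of Figure \ref{fig:regiones}, i.e.\ with $A\leq 0$ (cases 6 and 7, plus the surface $S_3$ where $A=0$ and Theorem \ref{th_HopfBif} gives a weak stable focus). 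Your plan should therefore build in the hypothesis $A\leq 0$ from the start; with it, your Poincar\'e--Bendixson bookkeeping is right that everything reduces to excluding limit cycles around the stable (possibly weak) focus, and that exclusion remains the open gap, in your attempt just as in the paper.
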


In Figure \ref{fig:regiones} are represented the regions and surfaces in the parameters space in which each one of the phase portraits are realised. In the region I and over the surface $S_1$ the phase portrait is the one in Figure \ref{fig:globales}(A) and in the region III the phase portrait is the one in Figure \ref{fig:globales}(B). In region II there are two subregions, II-a and II-b. It is proved that in the region II-a the phase portrait is the one in Figure \ref{fig:globales}(C) and we conjecture that the phase portrait is the same in the region II-b and over the surfaces $S_2$ and $S_3$.

\begin{figure}
	\centering
	\captionsetup{width=\textwidth}
	\includegraphics[width=8cm]{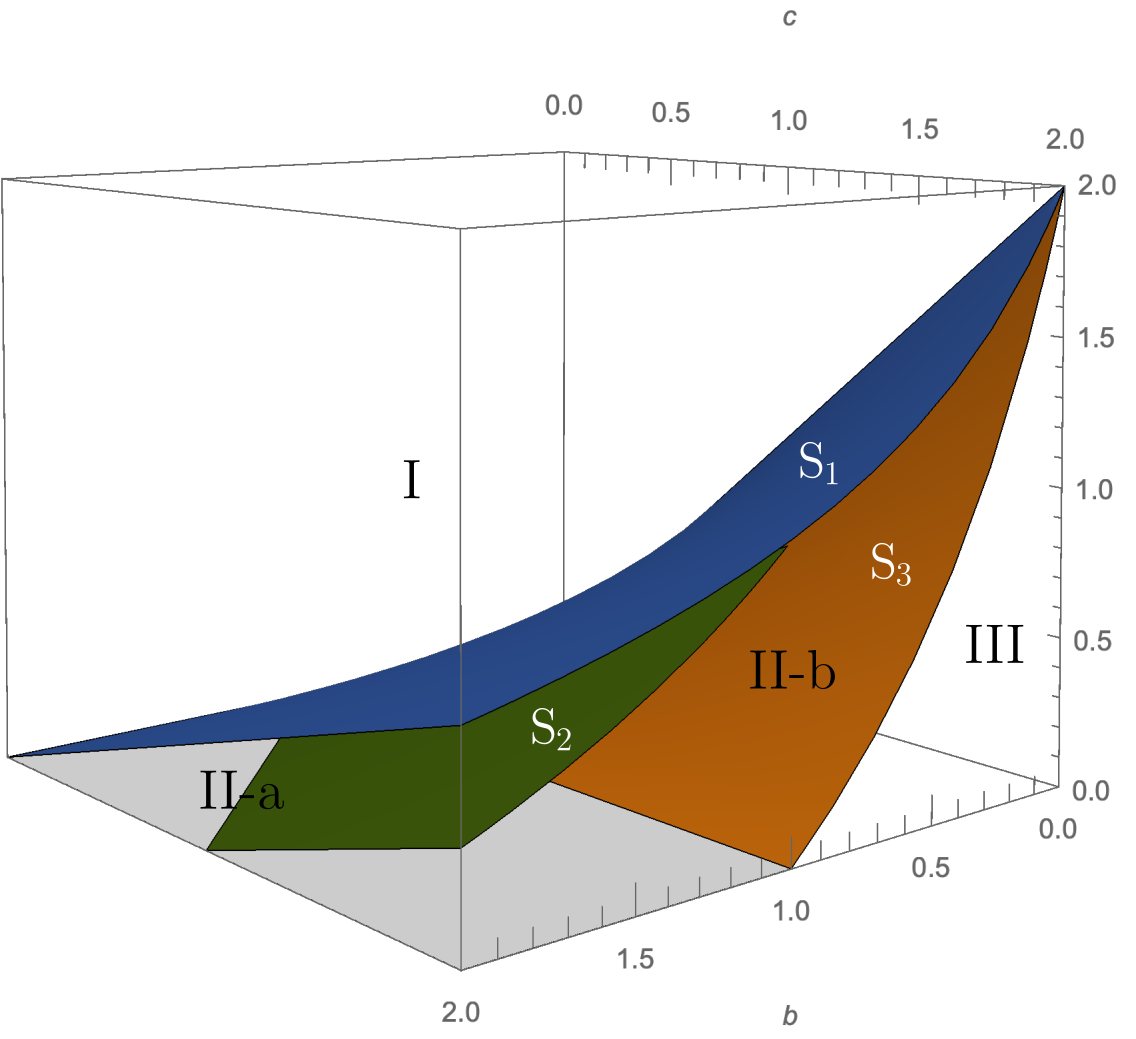}
	\caption{The regions I, II-a, II-b, III and the surfaces separating the different phase portraits: $S_1:\left\lbrace \delta=c/(b+1) \mid b, c \geq 0 \right\rbrace $, $S_2:\left\lbrace \delta= (1+c-b)/(b+1) \mid b,c \geq 0, (1+c-b)/(b+1)< c/(b+1) \right\rbrace $ and $S_3:\left\lbrace \delta= c(1-b)/(1+b) \mid b,c \geq 0 \right\rbrace $ }
	\label{fig:regiones}.
\end{figure}

\section{Preliminaries}\label{sec:preliminaries}

Here we introduce the Poincar\'e compactification, as it allows to control the dynamics of a polynomial differential system near the infinity.

Consider a polynomial system in  $\mathbb{R}^2$
\begin{equation*}
	\begin{split}
		\dot{x_1}&=P(x_1,x_2), \\
		\dot{x_2}&=Q(x_1,x_2),
	\end{split}
\end{equation*}
of degree $d$; the sphere $\mathbb{S}^2=\left\lbrace  y \in \mathbb{R}^3 : y_1^2+y_2^2+y_3^2=1 \right\rbrace$, which we will call 
the \textit{Poincar\'e sphere}, and its tangent plane at the point $(0,0,1)$ which we identify with $\mathbb{R}^2$.

We can obtain an induced vector field in $\mathbb{S}^2 \textbackslash \mathbb{S}^1$
by means of central projections $f^+:\mathbb{R}^2\to \mathbb{S}^2$ and $f^-:\mathbb{R}^2\to \mathbb{S}^2$, which are defined as 
\begin{equation*}
	f^+(x)=\left( \dfrac{x_1}{\Delta(x)}, \dfrac{x_2}{\Delta(x)}, \dfrac{1}{\Delta(x)} \right) \;\; \text{and} \;\; f^-(x)=\left( \dfrac{-x_1}{\Delta(x)}, \dfrac{-x_2}{\Delta(x)}, \dfrac{-1}{\Delta(x)} \right), 
\end{equation*}
where $\Delta(x)=\sqrt{x_1^2+x_2^2+1}$.
The differential $Df^+$ and $Df^-$ provide a vector field in the northern and southern hemisphere respectively. The points of the equator $\mathbb{S}^1$ of $\mathbb{S}^2$ correspond with 
the points at infinity of $\mathbb{R}^2$, and we can extend analytically the vector field to these points of the equator  multiplying the field by $y_3^d$. This extended field is called the \textit{Poincar\'e compactification} of the original vector field. Then  we must study the dynamics of the Poincaré compactification near $\mathbb{S}^1$, for studying the dynamics of the original field in the neighborhood of the infinity.

We will work in the local charts  $(U_i,\phi_i)$ and $(V_i,\psi_i)$ of the sphere $\mathbb{S}^2$, where $U_i= \left\lbrace y \in \mathbb{S}^2 :y_i > 0  \right\rbrace$, $V_i = \left\lbrace y \in \mathbb{S}^2 : y_i < 0  \right\rbrace$, $\phi_i: U_i \longrightarrow \mathbb{R}^2$ and $\psi_i: V_i \longrightarrow \mathbb{R}^2$ for $i=1,2,3$ with $\phi_i(y) = \psi_i(y) = \left( y_m/y_i , y_n/y_i \right)$ for $m < n$ and $m,n \neq i$.

The expression of the Poincaré compactification in the local chart $(U_1,\phi_1)$ is
\begin{equation}\label{Poincare_comp_U1}
	\dot{u}= v^d \left[ -u \: P\left( \frac{1}{v}, \frac{u}{v}\right)  + Q\left( \frac{1}{v}, \frac{u}{v}\right) \right], \;
	\dot{v}= - v^{d+1} \: P\left( \frac{1}{v}, \frac{u}{v}\right),
\end{equation}
in the local chart $(U_2,\phi_2)$ is
\begin{equation}\label{Poincare_comp_U2}
	\dot{u}= v^d \left[ P\left( \frac{u}{v}, \frac{1}{v}\right) - u Q\left( \frac{u}{v}, \frac{1}{v}\right) \right], \;
	\dot{v}= - v^{d+1} \: Q\left( \frac{u}{v}, \frac{1}{v}\right),
\end{equation}
and in the local chart $(U_3,\phi_3)$ the expression is
\begin{equation}\label{Poincare_comp_U3}
	\dot{u}= P(u,v), \;
	\dot{v}= Q(u,v).
\end{equation}

The expression for the Poincaré compactification in the local charts $(V_i,\psi_i)$, with $i=1,2,3$ is the same as in the charts $(U_i,\phi_i)$ multiplied by $(-1)^{d-1}$.

As we want to study the behaviour near the infinity, we must study the \textit{infinite singular points}, i.e., the singular points of the Poincaré compactification which lie in the equator $\mathbb{S}^1$. Note that it will be enough to study the infinite points on the local chart $U_1$ and the origin of the local chart $U_2$, because if $y\in \mathbb{S}^1$ is an infinite singular point, then $-y$ is also an infinite singular point and they have the same or opposite stability depending on whether the system has odd or even degree.

We shall present the phase portraits of the polynomial differential systems \eqref{sis} in the \textit{Poincar\'e disc}, i.e. the orthogonal projection of the closed northern hemisphere of $\mathbb{S}^2$ onto the plane $y_3=0$. This will be enough since the orbits of the Poincaré compactification  on $\mathbb{S}^2$ are symmetric with respect to the origin of $\mathbb{R}^3$ so we only need to consider the flow in the closed northern hemisphere.

See chapter 5 of \cite{Libro}  for more details about the Poincaré compactification.

\section{Finite Singular Points}\label{sec:finite}

First we study the finite singular points of system \eqref{sis} in the closed positive quadrant. The origin $P_0=(0,0)$ and the point $P_1=(1,0)$ are singular points for any values of the parameters, and $P_2=\left( b \delta /(c-\delta), (-b c (\delta +b \delta - c))/(c-\delta)^2\right)$ is a positive singular point if $c\neq \delta$ and $0<b\delta<c-\delta$. Note that if $b\delta=c-\delta$ then $P_1= P_2$.

Now we study the local phase portraits at these singular points. The origin is a saddle point, as the eigenvalues of the Jacobian matrix at this point are $b$ and $-\delta b$. At the point $P_1$ the eigenvalues are $-b-1$ and $-\delta b + c - \delta$. The first eigenvalue is always negative, but we distinguish three cases depending on the second one. If $c-\delta < b \delta$ then $P_1$ is a stable node; if $c-\delta >b \delta $ then $P_1$ is a saddle (this was the case in \cite{Huzak} because there $\delta>0$ was kept very small). If $c-\delta= b \delta $, then $P_1$ is a semi-hyperbolic singular point, so from \cite[Theorem 2.19]{Libro} we obtain that $P_1=P_2$ is a saddle-node. 

At the singular point $P_2$ the eigenvalues of the Jacobian matrix are 
\begin{equation*}
	\lambda_{1,2}=\frac{2}{(c-\delta)^2}(A\pm \sqrt{\delta B}), 
\end{equation*}
where 
\begin{equation*}
	A=\delta (c-\delta) - b \delta (c + \delta) \;\; \text{and} \;\; B=\delta(\delta(b+1)+c(b-1))^2 - 4 c (c-\delta)^2 (c-\delta(b-1)).
\end{equation*}

If $B<0$ then the eigenvalues are complex. In this case for $A>0$ the singular point $P_2$ is an unstable focus, and for $A<0$ it is a stable focus. We deal with this case $B<0$ in Section \ref{sec:cases3-7}, where we study the Hopf bifurcation which takes place at $P_2$. 

If $B=0$ we have $\lambda_1=\lambda_2= A/(c-\delta)^2$ and in this case $A$ cannot be zero, because if $A=0$ then $b=(c-\delta)/(c+\delta)$, and replacing this expression $B= - 4 c^2 (c-\delta)^3 /(c+\delta)$, so one of the conditions $c=0$ or $c-\delta=0$ must hold, but this is a contradiction as $c>0$ from the hypotheses, and if $c=\delta$ then $b=0$ again in contradiction with the hypotheses. Then $A\neq0$ and its sign determines if the singular point is either a stable or an unstable node. 

If $B>0$ both eigenvalues are real. The determinant of the Jacobian matrix is 
\begin{equation*}
	- \dfrac{b^2 c \delta}{(c-\delta)^2}(b\delta + \delta - c), 
\end{equation*}
which is positive because the singular point $P_2$ exists only if condition $b\delta<c-\delta$ holds. Then both eigenvalues are nonzero and have the same sign, particularly, if $A>0$ both are positive and $P_2$ is an unstable node, and if $A<0$ both are negative and $P_2$ is a stable node. 

The local phase portrait of the singular point $P_2$ in the case with $A=0$ will be proved in Subsection \ref{subsec:limitcyc}.

In summary, we describe in Table \ref{tab:cases} the finite singular points according the values of the parameters $b$, $c$ and $\delta$.

\begin{table}[H]
	\begin{center}
		\begin{tabular}{|cll|}
			\hline
			\textbf{Case} & \textbf{Conditions} & \textbf{Finite singular points} \\
			\hline
			\hline
			1&  $b\delta >c-\delta$. & $P_0$ saddle, $P_1$ stable node. \\	\hline
			2&  $b\delta =c-\delta$. & $P_0$ saddle, $P_1$ saddle-node.\\	\hline
			3&  $0<b\delta <c-\delta$, $B\geq0$, $A>0$. & $P_0$ saddle, $P_1$ saddle, $P_2$ unstable node. \\	\hline
			4&  $0<b\delta <c-\delta$, $B\geq0$, $A<0$. & $P_0$ saddle, $P_1$ saddle, $P_2$ stable node. \\	\hline
			5&  $0<b\delta <c-\delta$, $B<0$, $A>0$. & $P_0$ saddle, $P_1$ saddle, $P_2$ unstable focus. \\	\hline
			6&  $0<b\delta <c-\delta$, $B<0$, $A<0$. & $P_0$ saddle, $P_1$ saddle, $P_2$ stable focus. \\	\hline
			7&  $0<b\delta <c-\delta$, $B<0$, $A=0$. & $P_0$ saddle, $P_1$ saddle, $P_2$ weak stable focus. \\	\hline
		\end{tabular}
		\caption{The finite singular points in the closed positive quadrant.} \label{tab:cases}
	\end{center}
\end{table}

\section{Infinite Singular Points}\label{sec:infinite}

In this section we will consider the Poincar\'e compactification of system \eqref{sis} as it allows to study the behavior of the trajectories near infinity. 

In the chart $U_1$ system \eqref{sis} writes
\begin{equation}\label{sisU1}
	\begin{split}
		\dot{u}&=uv^2 - b(\delta+1)uv^2+(b+c-\delta-1)uv+u ,\\
		\dot{v}&=uv^2-bv^3+(b-1)v^2+v.
	\end{split}
\end{equation}

The only singular point over $v=0$ is the origin of $U_1$, which we denote by $O_1$. The linear part of system \eqref{sisU1} at the origin is the identity matrix, so $O_1$ is an unstable node.

In the chart $U_2$ system \eqref{sis} writes
\begin{equation}\label{sisU2}
	\begin{split}
		\dot{u}&=-u^3 + ( \delta + 1 - b - c)u^2 v + b(\delta+1) uv^2 - uv,\\
		\dot{v}&= (\delta - c) u v^2 + b\delta v^3.
	\end{split}
\end{equation}
The origin of $U_2$ is a singular point, $O_2$, and the linear part of system \eqref{sisU2} at $O_2$ is identically zero, so we must use the blow-up technique to study it. We do a horizontal blow up introducing the new variable $w_1$ by means of the variable change $vw_1=u$, and get the system
\begin{equation}\label{sis_blowup2}
	\begin{split}
		&\dot{w_1}=v^2w_1^3 + (1-b) v^2w_1^2 + b w_1 v^2 - w_1 v,\\
		&\dot{v}=(\delta-c)w_1v^3+b\delta v^3.
	\end{split}
\end{equation}
Now rescaling the time variable we cancel the common factor $v$, getting the system
\begin{equation}\label{sis_blowup3}
	\begin{split}
		&\dot{w_1}=vw_1^3 + (1-b) vw_1^2 + b w_1 v - w_1,\\
		&\dot{v}=(\delta-c)w_1v^2+b\delta v^2.
	\end{split}
\end{equation}
The only singular point on $v=0$ is the origin, which is  semi-hyperbolic. Applying \cite[Theorem 2.19]{Libro} we conclude that it is a saddle-node. Studying the sense of the flow over the axis we determine that the phase portrait around the origin of system \eqref{sis_blowup3} is the one on Figure \ref{fig:blowupU2}(a). If we multiply by $v$ the sense of the orbits on the third and fourth quadrants changes and all the points of the $w_1$-axis become singular points. With these modifications we obtain the phase portait for system \eqref{sis_blowup2}, given in Figure \ref{fig:blowupU2}(b). Then we undo the blow up going back to the $(u,v)$-plane. We must swap the third and fourth quadrants and shrink the exceptional divisor to the origin. The phase portrait obtained for system \eqref{sisU2} is not totally determined in the shaded regions of the third and fourth quadrants, see Figure \ref{fig:blowupU2}(c). This can be solved by doing a vertical blow up but, in our case, it is not necessary because we only need to know the phase portrait of $O_2$ in the positive quadrant of the Poincaré disc, which corresponds with the positive quadrant in the plane $(u,v)$, in which the phase portrait is well determined. 

\begin{figure}[H]
	\centering
	\begin{subfigure}[h]{4cm}
		\centering
		\includegraphics[width=3.5cm]{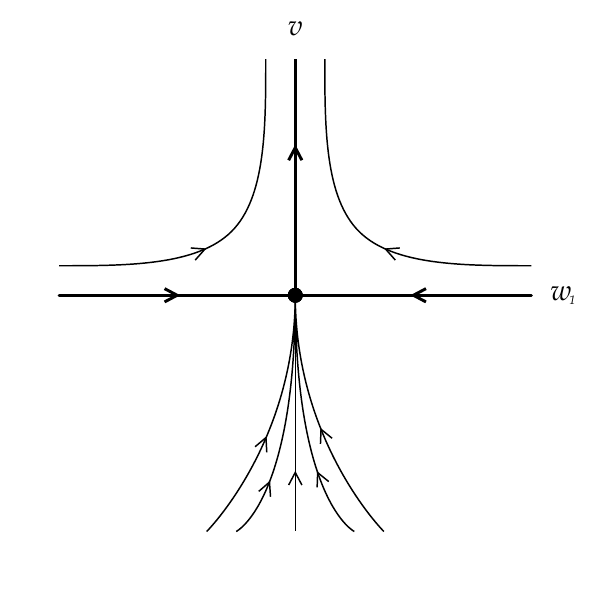}
		\caption*{(a)  Local phase portrait at the origin of system \eqref{sis_blowup3}}
	\end{subfigure}
	\begin{subfigure}[h]{4cm}
		\centering
		\includegraphics[width=3.5cm]{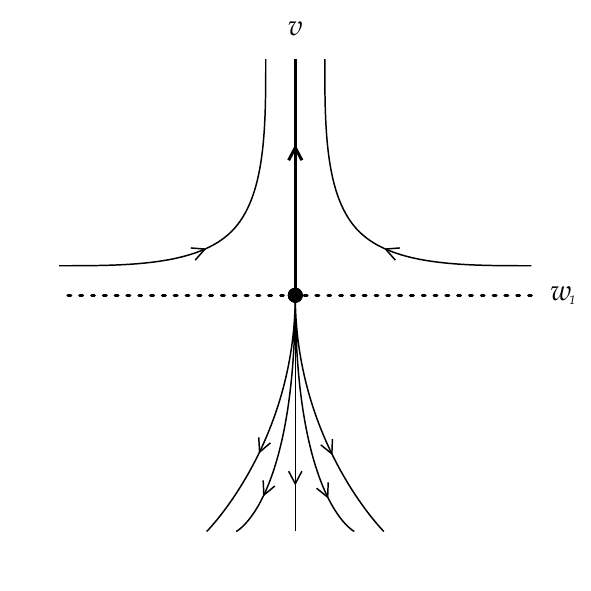}
		\caption*{(b)  Local phase portrait at the origin of system\eqref{sis_blowup2}}
	\end{subfigure}
	\begin{subfigure}[h]{4cm}
		\centering
		\includegraphics[width=3.5cm]{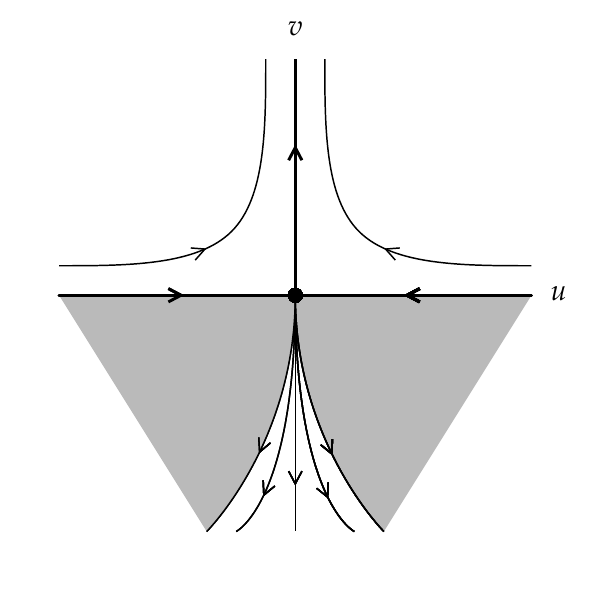}
		\caption*{(c)  Local phase portrait at the origin of system \eqref{sisU2}}
	\end{subfigure}
	\caption{Desingularization of the origin of system \eqref{sisU2}.}
	\label{fig:blowupU2}
\end{figure}

As a conclusion the local phase portrait at the infinite singular points is the same independently of the values of the parameters, so in all cases of Table \ref{tab:cases} the origin of the chart $U_1$, i.e. the singular point $O_1$, is an unstable node and the origin of the chart $U_2$, i.e. the singular point $O_2$  has only one hyperbolic sector on the positive quadrant of the Poincaré disc being one separatrix at infinity and the other on $x=0$.

\section{Cases with no singular points in the positive quadrant}\label{sec:cases12}

In the two first cases of Table \ref{tab:cases} there is no singular points in the positive quadrant. The finite singular points are the origin $P_0$ and $P_1$ which are both over the axes. The axes are invariant lines so there cannot exist a limit cycle surrounding these singular points. Therefore as we have determined the local phase portrait at the finite and infinite singularities, and we know there are no limit cycles, we can study the global portrait in the first quadrant of the Poincaré disc.

In both cases we obtain the same result since in the case in which $P_1$ is a saddle-node, studying the sense of the flow we determine that the parabolic sector of the saddle-node is always on the positive quadrant of the Poincaré disc. Analysing all the possible alpha and omega-limits, the only possibility is that all the orbits leave the infinite singular point $O_1$ and go to the finite singular point $P_1$. This phase portait is given in Figure \ref{fig:globales} (A).

\section{Cases with singular points in the positive quadrant}\label{sec:cases3-7}

\subsection{Existence of limit cycles}\label{subsec:limitcyc}

\begin{theorem}\label{th_LimCic3-5}
	If $0<b\delta<c-\delta$ and $A>0$, then there exists at least one limit cycle surrounding singular point $P_2$. 
\end{theorem}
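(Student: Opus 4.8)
The plan is to prove existence via the Poincar\'e--Bendixson theorem applied to a bounded annular region whose inner boundary is a small circle around $P_2$ and whose outer boundary is a positively invariant curve. Under the hypotheses $0<b\delta<c-\delta$ and $A>0$, Section~\ref{sec:finite} shows that $P_2$ is the unique singular point of system~\eqref{sis} in the open positive quadrant and that it is a repeller (an unstable node if $B\ge 0$, an unstable focus if $B<0$). Since both coordinate axes are invariant, the open quadrant is invariant, and a small circle $C_\varepsilon$ centred at $P_2$ is crossed strictly outward by the flow, so orbits escape from its interior. It then suffices to trap those orbits in a compact region free of singular points.

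First I would establish forward boundedness of every orbit in the open quadrant, which is the heart of the argument. Using the compactification of Section~\ref{sec:infinite}, the only infinite singular points adjacent to the positive quadrant are $O_1$, an unstable node, and $O_2$, which carries a single hyperbolic sector whose stable separatrix lies on the (invariant) line at infinity and whose unstable separatrix lies on $x=0$. Hence no finite orbit can tend to infinity in forward time: $O_1$ repels, while the only orbits converging to $O_2$ lie on the equator. Concretely, one confirms this by the sign structure of the field: the prey nullcline is the parabola $y=-(x-1)(x+b)$ and the predator nullcline is the vertical line $x=b\delta/(c-\delta)=:x^\ast$; one has $\dot{x}<0$ for every $x>1$, and for $y$ large the slope $dy/dx=\dot{y}/\dot{x}$ remains bounded while $x$ decreases toward $x^\ast$, so the predator coordinate cannot grow without bound. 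This yields a bounded positively invariant region.

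Next I would rule out that the $\omega$-limit set of an interior orbit touches the boundary. At $P_0$ the Jacobian is $\mathrm{diag}(b,-\delta b)$, so its stable manifold is the $y$-axis; at $P_1$ the Jacobian is upper triangular with eigenvalues $-1-b$ and $c-\delta-b\delta$, so its stable manifold is the $x$-axis. Both stable manifolds lie on the invariant axes, and locally in the interior the flow is pushed off them, since $\dot{x}>0$ near $P_0$ and, using $c-\delta-b\delta>0$, $\dot{y}>0$ near $P_1$. Thus no interior orbit has $\omega$-limit $P_0$ or $P_1$, and neither saddle admits a homoclinic loop in the quadrant. A polycycle is likewise excluded: on the boundary every orbit flows to the sink $P_1$, while the remaining infinite vertex $O_1$ is a node and cannot serve as a corner of a graphic.

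Combining these facts, the $\omega$-limit set of any orbit issuing from $C_\varepsilon$ is a nonempty compact invariant subset of the open quadrant containing no singular point; by the Poincar\'e--Bendixson theorem it is a periodic orbit. By the Poincar\'e index theorem the enclosed indices must sum to $+1$, and since $P_0$ and $P_1$ have index $-1$ while $P_2$ has index $+1$, the only admissible possibility is that the orbit encloses exactly $P_2$, so it surrounds $P_2$. I expect the main obstacle to be the rigorous proof of forward boundedness: the compactification makes the behaviour at infinity transparent, but converting it into an explicit bounded positively invariant region---especially closing that region off the axes near the saddles $P_0$ and $P_1$, where orbits pass close to the invariant boundary---is the delicate technical step.
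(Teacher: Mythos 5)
Your proposal is correct and takes essentially the same route as the paper: the paper's proof is a direct Poincar\'e--Bendixson argument using that $P_2$ and $O_1$ are repellers, that the unstable separatrix of the saddle $P_1$ enters the quadrant, and that no other singular point can serve as the $\omega$-limit of these orbits, while you simply supply the supporting details (forward boundedness, exclusion of boundary $\omega$-limits and polycycles, the index count) that the paper leaves implicit. One wording slip worth fixing: under the hypotheses $0<b\delta<c-\delta$ the point $P_1$ is a saddle, not a sink---though it is indeed attracting within the invariant $x$-axis, which is all your polycycle exclusion actually uses.
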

\begin{proof}
	If conditions $0<b\delta<c-\delta$ and $A>0$ hold, then we have case 3 or 5 of Table \ref{tab:cases}. In both cases singular point $P_1$ is a saddle which has an unstable separatrix on the positive quadrant,  $P_2$ is either an unstable node or an unstable focus, and $O_1$ is an unstable node. By Poincaré-Bendixon theorem, there must exists at least one limit cycle which is the $\omega$-limit of the orbits leaving $O_1$, the orbits leaving $P_2$ and the separatrix of $P_1$, as there are no other singular points that can be the $\omega$-limit of all these orbits.
\end{proof}


In cases 5, 6 and 7 of Table \ref{tab:cases} the Jacobian matrix at the point $P_2$ has complex eigenvalues because $B<0$.  In these cases we study the existence of Hopf Bifurcation, leading to the following result.

\begin{theorem}\label{th_HopfBif}
	The equilibrium $P_2$ of system \eqref{sis} undergoes a supercritical Hopf 
	bifurcation at $b_0=(c-\delta)/(c+\delta)$. For $b>b_0$ the system has a unique stable limit cycle bifurcating from the equilibrium point $P_2$. 
\end{theorem}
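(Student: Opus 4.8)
The plan is to verify the three standard hypotheses of the Hopf bifurcation theorem (see \cite{Libro}): a pair of purely imaginary eigenvalues at $b=b_0$, a transversality condition, and the sign of the first Lyapunov coefficient. First I would confirm that $b_0=(c-\delta)/(c+\delta)$ is exactly the value at which $A$ vanishes, since $A=\delta[(c-\delta)-b(c+\delta)]=0$ precisely when $b=b_0$. Because cases $5$--$7$ of Table \ref{tab:cases} require $B<0$, at $b=b_0$ the eigenvalues $\lambda_{1,2}=\frac{2}{(c-\delta)^2}(A\pm\sqrt{\delta B})$ collapse to the purely imaginary pair $\pm i\omega$ with $\omega=\frac{2\sqrt{-\delta B}}{(c-\delta)^2}>0$, which supplies the nondegeneracy $\omega\neq0$.

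For the transversality condition I would use that the real part of the eigenvalues is proportional to $A$, which is affine in $b$; hence $\frac{d}{db}\,\mathrm{Re}\,\lambda\big|_{b_0}$ is a nonzero multiple of $\frac{dA}{db}=-\delta(c+\delta)<0$, so the eigenvalues cross the imaginary axis with nonzero speed. The negative sign shows the crossing goes from the right to the left half-plane as $b$ increases, so $P_2$ passes from an unstable focus ($A>0$) to a stable focus ($A<0$); consequently the bifurcating stable cycle appears for parameter values where $P_2$ is unstable (that is, $A>0$), in agreement with the limit cycle obtained by Poincaré--Bendixson in Theorem \ref{th_LimCic3-5}.

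The decisive and most laborious step is the computation of the first Lyapunov coefficient. I would translate $P_2$ to the origin by $x=x_2+X$, $y=y_2+Y$, fix $b=b_0$, and apply a linear change of coordinates bringing the linear part into the canonical form $\dot X=-\omega Y+f(X,Y)$, $\dot Y=\omega X+g(X,Y)$, while recording the quadratic and cubic parts of $f$ and $g$ inherited from the cubic field \eqref{sis}. The coefficient is then evaluated from the Guckenheimer--Holmes formula
\begin{equation*}
a=\frac{1}{16}\left(f_{XXX}+f_{XYY}+g_{XXY}+g_{YYY}\right)+\frac{1}{16\omega}\left(f_{XY}(f_{XX}+f_{YY})-g_{XY}(g_{XX}+g_{YY})-f_{XX}g_{XX}+f_{YY}g_{YY}\right),
\end{equation*}
with all derivatives taken at the origin. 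The obstacle is entirely computational but substantial: after substituting $b_0=(c-\delta)/(c+\delta)$ one obtains a rational function of $c$ and $\delta$ whose sign must be settled. I expect that, using $c>\delta>0$ together with the standing inequality $B<0$, the numerator factors with a fixed sign so that $a<0$ throughout the region, i.e.\ the bifurcation is supercritical.

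Finally, purely imaginary eigenvalues, nonzero transversality and $a<0$ let me invoke the Hopf bifurcation theorem to conclude that a unique stable limit cycle is born from $P_2$, with amplitude of order $\sqrt{\,\abs{b-b_0}\,}$; its uniqueness and hyperbolic stability near $P_2$ follow from $a\neq0$. It would then remain only to observe that this locally bifurcating cycle is the one detected in the global analysis, matching the classification of Table \ref{tab:cases} and the corresponding phase portrait.
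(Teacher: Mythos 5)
Your strategy coincides with the paper's own proof: identify $b_0$ as the zero of $A$ (equivalently of the real part $\mu(b)$ of the eigenvalues), verify transversality through $dA/db=-\delta(c+\delta)<0$, and settle supercriticality by the sign of the first Lyapunov coefficient. The only methodological difference is the formula you choose for that coefficient: you would pass to the real canonical form $\dot X=-\omega Y+f$, $\dot Y=\omega X+g$ and use the Guckenheimer--Holmes $1/16$ formula, whereas the paper translates $P_2$ to the origin, writes the field as $\dot{\varepsilon}=A\varepsilon+\frac{1}{2}B(\varepsilon,\varepsilon)+\frac{1}{6}C(\varepsilon,\varepsilon,\varepsilon)$ and applies Kuznetsov's invariant expression $\ell_1=\frac{1}{2\omega^2}\,\mathrm{Re}\left(ig_{20}g_{11}+\omega g_{21}\right)$ built from the eigenvectors $p,q$ of $A(b_0)$ and $A(b_0)^T$; these are equivalent routes and neither is superior here. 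The genuine gap is that you stop exactly at the crux: you never compute the coefficient, you only ``expect'' the numerator to factor with a fixed sign. Since supercriticality is precisely the content of Theorem \ref{th_HopfBif}, the proof is incomplete without this computation. For the record, the paper obtains $\ell_1=-\dfrac{(c+\delta)^4}{4c^4\omega(c-\delta)^2}$, which is manifestly negative with no factoring argument and no appeal to $B<0$ beyond guaranteeing $\omega>0$; your anticipated case analysis over the region $B<0$ turns out to be unnecessary, but the algebra itself must be carried out for the theorem to be proved.

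One point in your favor worth making explicit: your deduction about the direction of the bifurcation is correct and in fact more careful than the theorem statement. Since $\ell_1<0$ and $\mu$ decreases through zero as $b$ increases, the stable cycle exists on the side where $P_2$ is unstable, i.e.\ for $A>0$, equivalently $b<b_0$ with $b_0-b$ small; this matches the paper's own concluding sentence of the proof and is consistent with Theorem \ref{th_LimCic3-5}, while the statement's ``for $b>b_0$'' is evidently a misprint. Two further minor remarks: you take $c>\delta>0$ as standing, whereas the paper deduces $c-\delta>0$ from $B<0$ (it also follows from $0<b\delta<c-\delta$), so this is harmless; and you correctly separate the local uniqueness of the bifurcating cycle (from $\ell_1\neq0$) from global uniqueness, which the paper establishes independently via the Liou--Cheng criterion in the subsequent proposition.
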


\begin{proof}
	The Jacobian matrix at this equilibrium is 
	\begin{equation*}
		A(b)=
		\left(
		\begin{array}{cc}
			-\dfrac{b \delta (c(b-1)+\delta(b+1))}{(c-\delta)^2}    & -\dfrac{b \delta}{c-\delta} \\[0.4cm]
			-\dfrac{b c (b \delta + \delta - c)}{c-\delta}         & 0 
		\end{array}
		\right),
	\end{equation*}
	and it has eigenvalues $\mu(b)\pm \omega(b) i$, where 
	\begin{equation}
		\mu(b)= \frac{b}{2(c-\delta)^2} A \;\;\; \text{and} \;\;\;  \omega(b)=\frac{b}{2(c-\delta)^2}\sqrt{-\delta B}.
	\end{equation}
	We get $\mu(b_0)=0$ for 
	\begin{equation}
		b_0=\dfrac{c-\delta}{c+\delta}.
	\end{equation}
	
	We are working under condition $B<0$ and from this condition it can be deduced that $c-\delta>0$, so the expression of $b_0$ obtained is positive. Therefore at $b=b_0$ the equilibrium point $P_2$ has a pair of pure imaginary eigenvalues $\pm i \omega (b)$ and the system will have a Hopf bifurcation if some Lyapunov constant is nonzero and $(d\mu/db)(b_0)\ne 0$.
	
	The equilibrium is stable for $b>b_0$ (i.e. for $A<0$) and unstable for $b<b_0$ (i.e. for $A>0$). In order to analyze this Hopf bifurcation we will apply  \cite[Theorem 3.3]{Kuznetsov}, so we must prove if the genericity conditions are satisfied. We check that the transversality condition is satisfied as
	\begin{equation}
		\frac{d\mu}{db}(b_0)=- \dfrac{\delta}{2(c-\delta)}<0,
	\end{equation}
	and the sign is determined because $c-\delta>0$.
	
	To check the second condition we must compute the first Lyapunov constant. We fix the value $b=b_0$ and then the equilibrium $P_2$ has the expression
	\begin{equation}
		P_2=\left( \frac{\delta}{c+\delta}, \frac{c^2}{(c+\delta)^2}\right).
	\end{equation}
	We translate $P_2$ to the origin of coordinates obtainig the system
	\begin{equation}
		\begin{split}
			\dot{\varepsilon}_1&= - \varepsilon_1^3  - \dfrac{\delta}{c+\delta} \varepsilon_1^2 - \varepsilon_1\varepsilon_2 - \dfrac{\delta}{c+\delta} \varepsilon_2,\\[0.2cm]
			\dot{\varepsilon_2}&= (c-\delta) \varepsilon_1 \varepsilon_2  + \dfrac{c^2 (c-\delta)}{(c+\delta)^2},\\
		\end{split}
	\end{equation}
	which can be represented as
	\begin{equation}
		\dot{\varepsilon}=A\varepsilon + \frac{1}{2}B(\varepsilon,\varepsilon)+\frac{1}{6}C(\varepsilon,\varepsilon,\varepsilon),
	\end{equation}
	where $A=A(b_0)$ and the multilinear functions $B$ and $C$ are given by
	\begin{equation*}
		B(\varepsilon,\eta)=
		\left(
		\begin{array}{c}
			- \dfrac{2\delta}{c+\delta}\: \varepsilon_1 \eta_1 - \varepsilon_1 \eta_2 - \varepsilon_2 \eta_1\\[0.4cm]
			(c-\delta) \varepsilon_1 \eta_2 + (c-\delta) \varepsilon_2 \eta_1
		\end{array}
		\right),
	\end{equation*}
	\begin{equation*}
		C(\varepsilon,\eta,\zeta)=
		\left(
		\begin{array}{c}
			\- 6 \varepsilon_1 \eta_1 \zeta_1 \\[0.4cm]
			0
		\end{array}
		\right).
	\end{equation*}
	
	We need to find two eigenvectors $p,q$ of the matrix $A$ verifying
	\begin{equation*}
		Aq=i\omega q, \hspace{1cm} A^Tp=-i\omega p,  \hspace{0.4cm} \text{and} \hspace{0.4cm} <p,q>=1,
	\end{equation*}
	as for example 
	\begin{equation}
		q= 
		\left(
		\begin{array}{c}
			-\dfrac{\delta}{c+\delta}	 \\[0.4cm]
			i \omega
		\end{array}
		\right) \;\; \text{and} \;\;
		p=
		\left(
		\begin{array}{c}
			- \dfrac{c+\delta}{2\delta} \\[0.4cm]
			i \omega \dfrac{(c+\delta)^3}{2c^2\delta (c-\delta)}
		\end{array}
		\right).
	\end{equation}
	
	Now we compute 
	\begin{equation*}
		g_{20}=\left\langle p,B(q,q)\right\rangle = \dfrac{\omega^2 (c+\delta)^5- c^2\delta^2 (c+\delta)}{2 \delta c^4 (c-\delta)} + \dfrac{\omega (c+\delta)^3}{2 c^2 \delta (c-\delta)}\: i,
	\end{equation*}
	\begin{equation*}
		g_{11}=\left\langle p,B(q,\overline{q})\right\rangle = - \dfrac{\delta (c+\delta)}{2 c^2 (c-\delta)}, \;\;\; 	g_{21}=\left\langle p,C(q,q,\overline{q})\right\rangle = - \dfrac{3 (c+\delta)^4}{4 c^4 (c-\delta)^2},
	\end{equation*}
	and the first Lyapynov coefficient
	\begin{equation*}
		\ell_1=\frac{1}{2\omega^2} Re(i g_{20}g_{11}+\omega g_{21})= 
		-\dfrac{(c+\delta)^4}{4 c^4 \omega (c-\delta)^2},
	\end{equation*}
	which is negative for any values of the parameters, and so the second condition of 
	the theorem we are applying is satisfied and we can conclude that a unique stable limit cycle bifurcates from the equilibrium point $P_2$ through a Hopf Bifurcation for $b<b_0$ with $b_0-b$ sufficiently small.
\end{proof}

\begin{proposition}
	If $0<b\delta<c-\delta$ and $A>0$, the limit cycle surrounding singular point $P_2$ is unique.
\end{proposition}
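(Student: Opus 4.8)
The plan is to combine the existence statement of Theorem \ref{th_LimCic3-5} with a uniqueness theorem for (generalized) Liénard systems, after bringing \eqref{sis} to Liénard form. Under the standing hypotheses $0<b\delta<c-\delta$ and $A>0$ the only singular point in the open positive quadrant is $P_2$, and since both coordinate axes are invariant, every limit cycle must lie entirely in the open quadrant and surround $P_2$. I would therefore pass to the logarithmic coordinates $X=\ln x$, $Y=\ln y$, which map the open quadrant diffeomorphically onto $\mathbb{R}^2$; dividing each equation of \eqref{sis} by its (positive) coordinate — a positive time rescaling that preserves the set of limit cycles — turns the system into
\begin{equation*}
\dot X=g(x)-y,\qquad \dot Y=(c-\delta)x-\delta b,
\end{equation*}
where $g(x)=-(x-1)(x+b)$ and $x=e^{X}$, $y=e^{Y}$.

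Next I would read this as a generalized Liénard system. Setting $v=-Y$ and keeping $u=X$, the field becomes $\dot u=\phi(v)-F(u)$, $\dot v=-\psi(u)$, with $\phi(v)=-e^{-v}$ strictly increasing, $\psi(u)=(c-\delta)e^{u}-\delta b=(c-\delta)(x-x^{*})$ where $x^{*}=b\delta/(c-\delta)$ is the abscissa of $P_2$, and $F(u)=(x-1)(x+b)$. One then checks the hypotheses of a Zhang Zhifen–type uniqueness theorem (equivalently, the Kuang–Freedman criterion for Gause predator–prey models): $\phi$ is strictly monotone, $\psi$ vanishes only at $u^{*}=\ln x^{*}$ with $(u-u^{*})\psi(u)>0$, and the primitive of $\psi$ tends to $+\infty$ as $u\to\pm\infty$. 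Under these conditions the number of limit cycles is at most one provided the ratio $F'(u)/\psi(u)$ is monotone on each side of $u^{*}$. Since $x=e^{u}$ is increasing, this reduces to the monotonicity in $x$, on $(0,x^{*})$ and on $(x^{*},\infty)$, of
\begin{equation*}
R(x)=\frac{F'(u)}{\psi(u)}=\frac{x\,(2x+b-1)}{(c-\delta)x-\delta b}.
\end{equation*}

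The computation I would then carry out is to differentiate $R$: its derivative has the sign of the upward parabola $\Psi(x)=2(c-\delta)x^{2}-4\delta b\,x-\delta b\,(b-1)$. The key algebraic step — and the heart of the proof — is to identify its discriminant. A short calculation gives $2\delta b+(c-\delta)(b-1)=b(c+\delta)-(c-\delta)=-A/\delta$, whence the discriminant of $\Psi$ equals $8\delta b\,[\,2\delta b+(c-\delta)(b-1)\,]=-8bA$. Thus exactly when $A>0$ the discriminant is negative, so $\Psi(x)>0$ for every $x$, hence $R'(x)>0$ on each side of $x^{*}$ and $R$ is strictly increasing there. The uniqueness theorem then yields at most one limit cycle surrounding $P_2$, and combined with the existence guaranteed by Theorem \ref{th_LimCic3-5} this gives exactly one; consistently, the theorem also forces this cycle to be stable, in agreement with the supercritical Hopf bifurcation of Theorem \ref{th_HopfBif}.

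I expect the main obstacle to be the careful verification of the hypotheses of the Liénard uniqueness theorem in this non-classical setting, in particular that the monotone but bounded nonlinearity $\phi(v)=-e^{-v}$ still falls within its scope and that no limit cycle escapes to the boundary of the quadrant; the invariance of the axes together with the growth of the primitive of $\psi$ handle the latter. The algebraic reduction, once the identity $2\delta b+(c-\delta)(b-1)=-A/\delta$ is in hand, is then immediate.
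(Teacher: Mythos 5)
Your proposal is correct and takes essentially the same approach as the paper: the paper verifies conditions (i)--(iv) of Liou--Cheng for the Gause system with $f(x)=-x^2+(1-b)x+b$, $g(x)=(c-\delta)x$, $\lambda=b\delta$, and its decisive condition (iv) is precisely your monotonicity of $xf'(x)/(g(x)-\lambda)$, settled by the same quadratic with discriminant $-8bA<0$; your logarithmic/Li\'enard reduction simply unpacks the cited criterion, which is proved exactly this way via Zhang Zhifen-type theorems. Incidentally, your $\Psi(x)=2(c-\delta)x^{2}-4\delta b\,x-\delta b\,(b-1)$ is the correct numerator, whereas the paper's displayed numerator $-2x^{2}(c-\delta)+4x\delta b(b-1)$ contains a typographical slip (the constant term $(b-1)\delta b$ is garbled), although the paper's sign conclusion agrees with yours.
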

\begin{proof}
	This result follows from \cite{LiouCheng} by proving that system \eqref{sis} with  $0<b\delta<c-\delta$ and $A>0$ satisfies conditions (i)-(iv) in Section 2 of \cite{LiouCheng}.
	
	Condition (i) holds taking $g(x)=(c-\delta)x$ which verifies $g(0)=0$ and $g'(x)>0$ for all $x\geq0$ as we have assumed $c-\delta>0$. 
	
	Condition (ii) holds for $f(x)=-x^2+(1-b)x+b$, $K=1$ and $a=(1-b)/2$. From condition $A>0$ we deduce that
	\begin{equation*}
		\delta(c-\delta)-b\delta(c+\delta)>0 \Rightarrow \dfrac{c-\delta}{c+\delta}>\dfrac{b\delta}{\delta} \Rightarrow 1> \dfrac{c-\delta}{c+\delta}>b,
	\end{equation*}
	and condition $b<1$ guarantees that $a>0$. 
	
	Condition (iii) holds for $\lambda=b\delta$ and $x^*= \delta b/(c-\delta)$.  It can be proved that with the expressions chosen for $a$ and $x^*$ the condition $x^*<a$, is equivalent to the condition $A>0$: 
	\begin{equation*}
		x^*< a \Leftrightarrow \dfrac{\delta b}{c-\delta}<\dfrac{1-b}{2} \Leftrightarrow 2\delta b <(1-b)(c-\delta) \Leftrightarrow \delta b + b c < c - \delta \Leftrightarrow b<\dfrac{c-\delta}{c+\delta} \Leftrightarrow A>0.
	\end{equation*}
	
	Condition (iv) is satisfied with
	\begin{equation*}
		x^*=\dfrac{\delta b }{c-\delta} \;\;\; \text{and} \;\;\; \overline{x}^*=1-\dfrac{b c }{c-\delta}.
	\end{equation*}
	We have 
	\begin{equation}
		\dfrac{d}{dx} \dfrac{x f'(x)}{g(x)-\lambda} = \dfrac{-2 x^2 (c-\delta) + 4 x \delta b (b-1)}{((c-\delta)x-\delta b)^2}, 
	\end{equation}
	which is always negative as the polynomial in the numerator is negative in $x=0$ and has no real roots. 
	
	Then, as conditions (i)-(iv) hold for our systems, we can conclude that the limit cycle is unique.
\end{proof}

\begin{remark}
	Theorem $\ref{th_HopfBif}$ proves that the unique limit cycle of system \eqref{sis} appears from the equilibrium point $P_2$ in a Hopf bifurcation. From the proof of Theorem $\ref{th_HopfBif}$ the singular point $P_2$ when $B<0$ and $A=0$ is a weak stable focus.
\end{remark}

So far we have not proved if in cases 4, 6, and 7 of Table \ref{tab:cases} there are or not limit cycles. The following result proves that in some subcases there are not limit cycles. 

\begin{theorem}\label{thm:noperiodic}
	If $0<b \delta < c - \delta$, $A<0$ and
	$1+c < \delta + b + b\delta$,
	then system \eqref{sis}  does not have periodic orbits in the set $\{(x,y)\in\R^2: 
	x,z\geq0\}$. 
\end{theorem}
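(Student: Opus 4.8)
The plan is to rule out periodic orbits by the Bendixson--Dulac criterion. Since the coordinate axes $\{x=0\}$ and $\{y=0\}$ are invariant for \eqref{sis}, any periodic orbit must lie in the open first quadrant $Q=\{x>0,\,y>0\}$, which is simply connected; it therefore suffices to exhibit a $C^1$ function $B$ on $Q$ for which $\operatorname{div}(B\dot x,\,B\dot y)$ is of one sign on $Q$ and not identically zero. Motivated by the Kolmogorov structure $\dot x = x\,f(x,y)$, $\dot y = y\,g(x)$ with $f=-x^2+(1-b)x-y+b$ and $g=(c-\delta)x-\delta b$, I would search among monomial Dulac functions $B=x^{\alpha}y^{\beta}$, which are positive and smooth on $Q$.

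Computing $\partial_x(BP)+\partial_y(BQ)$ and factoring out the common $x^{\alpha}y^{\beta}>0$, one is left with a polynomial bracket whose sign must be controlled. The decisive simplification is that $g$ is independent of $y$, so the only $y$-dependence in the bracket is a single term proportional to $(\alpha+1)y$; choosing $\alpha=-1$ annihilates it and leaves a quadratic in $x$ alone,
\[
h(x) = -2x^2 + \big[(1-b)+(\beta+1)(c-\delta)\big]\,x - (\beta+1)\,\delta b .
\]
Thus $B=x^{-1}y^{\beta}$ reduces the whole problem to making $h$ negative on $x>0$, since then $\operatorname{div}(B\dot x,B\dot y)=x^{-1}y^{\beta}h(x)<0$ throughout $Q$.

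The heart of the argument is then an elementary sign analysis of $h$ with a judicious choice of the free exponent $\beta$. The leading coefficient is $-2<0$, and taking $\beta+1>0$ gives $h(0)=-(\beta+1)\delta b<0$, so it remains only to prevent $h$ from becoming positive somewhere on $x>0$; I would achieve this by driving the discriminant of $h$ negative, i.e. by securing $\big[(1-b)+(\beta+1)(c-\delta)\big]^2<8(\beta+1)\,\delta b$. Viewing the difference of the two sides as an upward quadratic in the variable $\beta+1$, its discriminant equals $32\,\delta b\,\big[\,b(c+\delta)-(c-\delta)\,\big]$, which is positive exactly when $A<0$; the minimizing value of $\beta+1$ is then positive as well, so an admissible exponent making $h$ strictly negative on $x>0$ exists. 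The remaining hypotheses $0<b\delta<c-\delta$ and $1+c<\delta+b+b\delta$ place us in the pertinent cases of Table \ref{tab:cases} and should serve to fix a convenient such $\beta$; with that choice Bendixson--Dulac yields the absence of periodic orbits in $Q$, hence in the whole closed quadrant.

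The main obstacle is precisely this last step: certifying that the stated inequalities do admit an exponent $\beta$ making $h$ negative definite on the positive $x$-axis, and keeping the bookkeeping of the parameter inequalities aligned with the sign of $h$. Should a single monomial Dulac factor prove awkward, I would fall back on the Gause/Liénard viewpoint already used for \cite{LiouCheng}: the same change of variables that recasts \eqref{sis} in generalized Liénard form now operates under $A<0$, so that the vertical predator isocline lies to the right of the vertex of the prey isocline, placing the system in the regime of classical global-stability theorems for Gause predator--prey models, from which the non-existence of periodic orbits again follows.
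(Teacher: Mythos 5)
Your proposal is correct, and it in fact subsumes the paper's proof as a special case. The paper uses the same tool, the Bendixson--Dulac criterion, but with the single fixed Dulac function $\varphi(x,y)=1/x$, which is your family $x^{\alpha}y^{\beta}$ at $\alpha=-1$, $\beta=0$; instead of analyzing the sign of the resulting quadratic exactly, the paper first discards the strip $x\ge 1$ (where $\dot x<0$ forbids periodic orbits) and then, on $0<x<1$, uses the crude bound $-b\delta/x<-b\delta$, and it is precisely this loss that forces the third hypothesis $1+c<\delta+b+b\delta$. Your computations check out: with $t=\beta+1$ one gets $\operatorname{div}(B\dot x,B\dot y)=x^{-1}y^{\beta}h(x)$ with $h(x)=-2x^2+[(1-b)+t(c-\delta)]x-t\delta b$, and $F(t)=[(1-b)+t(c-\delta)]^2-8t\delta b$ has discriminant $32\delta b\,[b(c+\delta)-(c-\delta)]$, positive exactly when $A<0$. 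Moreover, the worry in your last paragraph is unfounded and the Li\'enard fallback unnecessary, because your own computation already certifies an admissible exponent: $A<0$ is equivalent to $(1-b)(c-\delta)<2\delta b$, whence $F'(0)=2\left[(1-b)(c-\delta)-4\delta b\right]<0$ while $F(0)=(1-b)^2\ge 0$, so both roots of the upward parabola $F$ are nonnegative and any $t$ strictly between them gives $h<0$ on all of $[0,\infty)$, hence $\operatorname{div}<0$ on the open quadrant $Q$ (which is convex, so the interior of any periodic orbit in $Q$ stays in $Q$ and Green's theorem applies; the invariance of the axes rules out periodic orbits meeting them). What your sharper route buys is substantial: the hypothesis $1+c<\delta+b+b\delta$ is never used, so you prove nonexistence of periodic orbits under $0<b\delta<c-\delta$ and $A<0$ alone, which is strictly stronger than the stated theorem and settles the paper's own conjecture in cases 4 and 6 of Table \ref{tab:cases}; even case 7 ($A=0$) follows with the limiting choice of $t$ at the vertex of $F$, where $h\le 0$ vanishes only on a single vertical line, a measure-zero set that still yields a contradiction in the integral form of the Dulac criterion.
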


\begin{proof}
	Let 
	\begin{equation*}
		f(x,y)=x(-x^2+(1-b)x-y+b)\;\; \text{and} \;\; g(x,y)=  y((c-\delta)x - \delta b).
	\end{equation*}
	
	In order to prove the non existence of periodic orbits  we  use 
	Bendixson-Dulac Theorem that states that if there exists a function $\varphi(x,y)$
	such that the term
	\begin{equation*}
		\Delta(x,y)=\frac{\partial (\varphi f)}{\partial x}+\frac {\partial (\varphi g)}{\partial y}
	\end{equation*}
	does not change sign in a simply connected set  $\mathcal{S}$, then there are no 
	periodic orbits on  $\mathcal{S}$.\\
	We consider the function $\varphi(x,y)=1/x$, then:
	
	\begin{equation*}
		\Delta(x,y)= 1 + c - \delta - 2x - \dfrac{b (\delta+x)}{x}.
	\end{equation*}
	We observe that there are no periodic orbits in the set 
	\begin{equation*}
		\{(x,y)\in\R_+^2: \; x\geq 1\},
	\end{equation*}
	because $\dot{x}<0$ for all the points in this set and for the same reason there are no periodic orbits crossing the line
	$\{x=1, y\geq0\}$. As a consequence we can restrict to the case $x< 1$
	for which we obtain
	
	\begin{equation*}
		\Delta(x,y)< 1 + c - \delta - \dfrac{b\delta}{x} - b < 1 + c - \delta - b\delta - b.
	\end{equation*}
	Then $\Delta(x,y)<0$  in $\left\lbrace (x,y)\in\R^2:\; 0 \leq x\leq 1, y \geq 0 \right\rbrace $
	if $1+c-\delta-b-b\delta<0$ and we conclude that there are no
	periodic orbits in the whole set $\left\lbrace (x,y)\in\R^2: x\geq 0, y \geq 0\right\rbrace $.
	
\end{proof}

\begin{conjecture*}
	If $0<b \delta < c - \delta$, $A<0$ (i. e, we are in cases 4,6, or 7 of  Table \ref{tab:cases}) and $1+c > \delta + b + b\delta$,  there are not limit cycles.
\end{conjecture*}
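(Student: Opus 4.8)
The plan is to push the Bendixson--Dulac argument of Theorem \ref{thm:noperiodic} further by allowing a genuinely two--variable Dulac function adapted to the Kolmogorov structure of \eqref{sis}, rather than the fixed choice $\varphi=1/x$. I would try
\begin{equation*}
	\varphi(x,y)=x^{-1}y^{\,s-1}
\end{equation*}
on the open quadrant $\{x,y>0\}$, where $s>0$ is a constant to be fixed later. Writing $f,g$ as in the proof of Theorem \ref{thm:noperiodic} and computing $\Delta=\partial_x(\varphi f)+\partial_y(\varphi g)$, the exponent $-1$ on $x$ is exactly what is needed to cancel the term linear in $y$ produced by $\partial_x(\varphi f)$ (recall that $\partial_y g\equiv 0$ for \eqref{sis}), so that $\Delta$ becomes independent of $y$. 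A direct computation should give
\begin{equation*}
	\Delta(x,y)=\frac{y^{\,s-1}}{x}\,H(x),\qquad H(x)=-2x^{2}+\bigl[(1-b)+s(c-\delta)\bigr]x-s\,b\delta .
\end{equation*}
Since $y^{\,s-1}/x>0$ on the open quadrant, the sign of $\Delta$ coincides with the sign of the one--variable polynomial $H$, and it suffices to choose $s>0$ making $H(x)<0$ for every $x>0$.

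Because $H$ is a downward parabola, the cleanest sufficient condition is that $H$ have no real root, i.e. that its discriminant $\bigl[(1-b)+s(c-\delta)\bigr]^{2}-8\,s\,b\delta$ be negative. Reading this as a quadratic inequality in the free parameter $s$, I would compute its discriminant in $s$; the key identity I expect is that this discriminant equals $-32\,b\,A$, where $A=\delta(c-\delta)-b\delta(c+\delta)$ is the same quantity that governs the stability of $P_2$. Hence under the standing hypothesis $A<0$ the $s$--quadratic has two distinct real roots and is negative on the open interval between them; a short check that this interval meets $(0,\infty)$ (its roots have product $(1-b)^2/(c-\delta)^2\ge 0$ and, when $A<0$, positive sum) then produces an admissible $s>0$. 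For such $s$ one obtains $\Delta<0$ throughout the simply connected set $\{x,y>0\}$, and Bendixson--Dulac rules out every periodic orbit there, in particular any limit cycle surrounding $P_2$.

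Two remarks on scope. First, this argument never uses the sign condition $1+c>\delta+b+b\delta$: it relies only on $A<0$. So, if it goes through, it should simultaneously settle the conjectured region and reprove Theorem \ref{thm:noperiodic}, yielding the single clean criterion ``$A<0\Rightarrow$ no limit cycle''. Second, the boundary case $A=0$ (case 7) is the limiting situation in which the $s$--interval degenerates to a single point $s_0>0$ where $H$ acquires a double root; there $\Delta\le 0$ with equality only on one vertical line, which is still enough for Bendixson--Dulac, consistently with the supercritical Hopf bifurcation of Theorem \ref{th_HopfBif}.

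The step I expect to be decisive---and the only place genuine work hides---is the identity relating the $s$--discriminant of $H$ to $A$, equivalently verifying $2b\delta-(1-b)(c-\delta)=-A/\delta$ and then checking that the admissible $s$--interval indeed lands in $(0,\infty)$. Everything downstream is the standard Bendixson--Dulac conclusion. If, contrary to this expectation, the admissible interval failed to be positive on some subregion with $A<0$, the fallback when $b>1$ is the alternative route $(1-b)+s(c-\delta)\le 0$ with small $s>0$, which forces both roots of $H$ to be nonpositive and again gives $H<0$ on $x>0$.
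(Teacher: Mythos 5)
The paper never proves this statement --- it is left as a conjecture supported only by ``numerical evidences'' --- so there is no proof of record to compare against, and your proposal, if written out, appears to actually settle it. I checked the computations you flagged as ``expected'' and they are correct. With $\varphi=x^{-1}y^{s-1}$ one indeed gets
\begin{equation*}
	\Delta(x,y)=\frac{y^{s-1}}{x}\,H(x),\qquad H(x)=-2x^{2}+\bigl[(1-b)+s(c-\delta)\bigr]x-s\,b\delta,
\end{equation*}
and the discriminant in $s$ of $D(s):=\bigl[(1-b)+s(c-\delta)\bigr]^{2}-8sb\delta$ equals $32b\delta\bigl(2b\delta-(1-b)(c-\delta)\bigr)=-32bA$, since $2b\delta-(1-b)(c-\delta)=b(c+\delta)-(c-\delta)=-A/\delta$. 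For $A<0$ the two roots of $D$ have product $(1-b)^{2}/(c-\delta)^{2}\geq0$ and sum $\bigl(8b\delta-2(1-b)(c-\delta)\bigr)/(c-\delta)^{2}>4b\delta/(c-\delta)^{2}>0$, so the interval where $D<0$ indeed meets $(0,\infty)$; for such $s>0$, $H<0$ on all of $\R$ and $\Delta<0$ on the open quadrant, which is simply connected, and since the axes are invariant no periodic orbit of \eqref{sis} can meet them, so Bendixson--Dulac excludes all limit cycles. Your route is the natural strengthening of the paper's own Theorem \ref{thm:noperiodic}, which is the special case $s=1$ of your family degraded by a crude estimate (the paper discards the $-2x$ term, which is why it needs the extra hypothesis $1+c<\delta+b+b\delta$); your one-parameter optimization removes that hypothesis entirely and yields the clean criterion ``$A<0\Rightarrow$ no limit cycles,'' which together with Theorem \ref{th_LimCic3-5} would give ``limit cycle iff $A>0$'' and complete the paper's classification (phase portrait (C) in region II-b and on the surfaces $S_2$, $S_3$).

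Three small repairs before this can stand as a proof. First, the parenthetical ``$\partial_y g\equiv 0$'' is false ($\partial_y g=(c-\delta)x-\delta b$); what the cancellation actually uses is the Kolmogorov structure: $g/y$ is independent of $y$ and $f/x$ is linear in $y$, so both terms of $\Delta$ carry the common factor $y^{s-1}$. Second, $\Delta$ is not independent of $y$ unless $s=1$; only its sign is, which is all you need, but the sentence should say so. Third, in the boundary case $A=0$ (case 7 of Table \ref{tab:cases}, which the conjecture's parenthetical includes) you get $D(s_0)=0$ at the double root $s_0=2b\delta/(c-\delta)^{2}>0$, and then $H(x)=-2(x-x^{*})^{2}$ with $x^{*}=b\delta/(c-\delta)$, so $\Delta\leq0$ vanishes exactly on the vertical line through $P_2$; here you should invoke the integral (Green's theorem) form of the Dulac criterion, since $\oint\varphi(f\,dy-g\,dx)=0$ along a periodic orbit while the enclosed double integral of $\Delta$ is strictly negative because $\Delta<0$ off a line of measure zero. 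With these clarifications the argument is complete and, as you observe, consistent with the supercritical Hopf bifurcation of Theorem \ref{th_HopfBif}.
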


We have numerical evidences that the conjecture holds.

\subsection{Phase portraits on the positive quadrant of the Poincaré disc}

Now we study the global phase portraits of system \eqref{sis} on the positive quadrant of the Poincaré disc when there is a singular point in the positive quadrant, assumming the previous Conjecture.
\color{black}

In case 3 of Table \ref{tab:cases},  by Theorem \ref{th_LimCic3-5} there exist a unique  limit cycle which is the $\omega$-limit of all orbits leaving $O_1$ and $P_2$, and also the $\omega$-limit of the unstable separatrix leaving $P_1$ in the positive quadrant. Then the global phase portraits is the one on Figure \ref{fig:globales}(B).

In case 5 of Table \ref{tab:cases} we have  again that there exists a unique limit cycle attracting all orbits in the positive quadrant. The global phase portrait is the same as the one in case 3 but here the singular point in the positive quadrant is an unstable focus instead of an unstable node. As the local phase portraits of these two singular points are topologically equivalent we have again phase portrait (B) of Figure \ref{fig:globales}.

In cases 4, 6 and 7 of Table \ref{tab:cases}, if $1+c<\delta+b+b\delta$ we have proved that there are no limit cycles.
In case 4 the only possibility is that the stable node $P_2$ is a global attractor for all orbits in the positive quadrant, and we have the global phase portrait given in Figure \ref{fig:globales}(C). In cases 6 and 7 of Table \ref{tab:cases}, $P_2$ is a stable focus and attracts all the orbits of the positive quadrant. As the local phase portrait of a stable focus is topologically equivalent to a stable node,  we also have here the phase portrait of Figure \ref{fig:globales}(C).

In the cases 4, 6 and 7 of Table \ref{tab:cases}, if the conditions $1+c<\delta+b+b\delta$ does not hold, we have asummed that there are not limit cycles, so the conjectured phase portraits will be the same.

\section*{Acknowledgements}

The first and third authors are partially supported by the Ministerio de Ciencia e Innovación, Agencia Estatal de Investigación (Spain), grant PID2020-115155GB-I00 and the Consellería de Educación, Universidade e Formación Profesional (Xunta de Galicia), grant ED431C 2019/10 with FEDER funds. The first author is also supported by the Ministerio de Educacion, Cultura y Deporte de España, contract FPU17/02125.

The second author is partially supported by the Agencia Estatal de Investigaci\'on grant PID2019-104658GB-I00, and the H2020 European Research Council grant MSCA-RISE-2017-777911.


\end{document}